\begin{document}

\title{On the existence of reflecting $n$-queens configurations}

\date{}

\author{Tantan Dai}
\address{Georgia Institute of Technology}
\email{tdai44@gatech.edu}

\author{Tom Kelly}
\address{Georgia Institute of Technology}
\email{tom.kelly@gatech.edu}

\thanks{Research supported by the National Science Foundation under Grant No. DMS-2247078.}

\begin{abstract}
    
    In 1967, Klarner proposed a problem concerning the existence of reflecting $n$-queens configurations. The problem considers the feasibility of placing $n$ mutually non-attacking queens on the \emph{reflecting chessboard}, an $n\times n$ chessboard with a $1\times n$ ``reflecting strip'' of squares added along one side of the board. A queen placed on the reflecting chessboard can attack the squares in the same row, column, and diagonal, with the additional feature that its diagonal path can be reflected via the reflecting strip. Klarner noted the equivalence of this problem to a number theory problem proposed by Slater, which asks: for which $n$ is it possible to pair up the integers 1 through $n$ with the integers $n+1$ through $2n$ such that no two of the sums or differences of the $n$ pairs of integers are the same. We prove the existence of reflecting $n$-queens configurations for all sufficiently large $n$, thereby resolving both Slater's and Klarner's questions for all but a finite number of integers. 

\end{abstract}

\maketitle

\section{Introduction}

An \emph{$n$-queens configuration} is a placement of $n$ queens on an $n\times n$ chessboard such that no two queens are in the same row, column, or diagonal. The classical $n$-queens problem asks how many $n$-queens configurations exist for a given $n \times n$ chessboard. The problem can also be considered on a toroidal chessboard, where the diagonals wrap around the board from left to right and from top to bottom. The problem was first proposed in 1848 by German chess composer Max Bezzel and elicited the interest of several prominent mathematicians, including Gauss and P\'olya. Detailed accounts of the historical development of the $n$-queens problem can be found in the survey by Bell and Stevens \cite{BELL20091} and the work by Bowtell and Keevash \cite{bowtell2021nqueens}.

Let $Q(n)$ denote the number of classical $n$-queens configurations, and let $T(n)$ denote the number of toroidal $n$-queens configurations. In 1874, Pauls \cite{pauls1, pauls2} proved that $Q(n)>0$ for every $n\ge 4$. In 1918, P\'olya \cite{polya} showed that $T(n)>0$ if and only if $n\equiv 1$ or $5\pmod 6$. In 1994, Rivin, Vardi, and Zimmerman \cite{rivin1994} conjectured that $\log Q(n) = \Theta (n\log (n))$ and that $\log T(n) = \Theta (n\log (n))$ for $n\equiv 1,5\pmod 6$. In 2017, Luria \cite{luria2017new} showed that $T(n)\le ((1+o(1))ne^{-3})^n$ and that there exists a constant $\alpha > 1.587$ such that $Q(n)\le ((1+o(1))ne^{-\alpha})^n$ for all $n$. Bowtell and Keevash \cite{bowtell2021nqueens} and independently Luria and Simkin \cite{luria2021lower} proved that $Q(n)\ge ((1+o(1))ne^{-3})^n$ for all sufficiently large $n$.  Bowtell and Keevash \cite{bowtell2021nqueens} also proved that $T(n) \geq ((1 - o(1))ne^{-3})^n$ for $n \equiv 1,5 \pmod 6$, thereby giving an asymptotic solution to the toroidal $n$-queens problem. These results completely settled the conjecture by Rivin, Vardi, and Zimmerman \cite{rivin1994}. Furthermore, Simkin \cite{simkin2022number} improved the bounds for the classical $n$-queens problem, showing that there exists a constant $1.94<\alpha < 1.9449$ such that $Q(n) = ((1\pm o(1))ne^{-\alpha})^n$. 

Glock, Munhá Correia, and Sudakov \cite{glock2022nqueens} investigated a natural extension of the $n$-queens problem, known as the $n$-queens completion problem. This problem asks under what conditions a given placement of mutually non-attacking queens can be extended to an $n$-queens configuration. They showed that any partial configuration with at most $n/60$ queens can be completed. Additionally, they provided a partial configuration of roughly $n/4$ queens that cannot be extended into a complete $n$-queens configuration. A key tool in their work is their ``rainbow matching lemma'', which is also crucial to the proof of our main theorem. 

We consider a slight variation of the $n$-queens problem.  Consider an $n\times n$ chessboard with an additional $1\times n$ strip of squares, called the \emph{reflecting strip}, attached to one side of the chessboard. Without loss of generality, we assume the reflecting strip is placed above the $n\times n$ chessboard.  The squares on the reflecting strip are labeled from left to right as $1,2,\ldots, n$. The \emph{$k$th reflecting diagonal} is the union of the two diagonals (one of which may be empty) whose extension intersects with the $k$th square of the reflecting strip. A diagonal whose extension does not intersect the reflecting strip is called a \emph{non-reflecting diagonal}. An $n\times n$ chessboard with the reflecting strip, along with the rows, columns,  reflecting diagonals, and non-reflecting diagonals defined above, is called a \emph{reflecting chessboard}. Two queens attack each other on a reflecting chessboard if they are in the same row, column, reflecting diagonal, or non-reflecting diagonal. A \emph{reflecting $n$-queens configuration} is a placement of $n$ mutually non-attacking queens on the $n \times n$ reflecting chessboard. Figure~\ref{figure:reflecting} provides an illustration of the reflecting chessboard. The figure on the left illustrates the $3$rd reflecting diagonal (in cyan) and the $8$th reflecting diagonal (in pink) on the reflecting $8\times 8$ chessboard. The figure on the right shows two queens that do not attack each other on a classical $8\times 8$ chessboard but attack each other on the reflecting $8\times 8$ chessboard via the $4$th reflecting diagonal.  See also Definition~\ref{def:reflecting-chessboard}.

\begin{figure}
\begin{minipage}{.5\linewidth}
\centering
    \begin{tikzpicture}
     \filldraw[cyan] (0,3) rectangle (0.5,3.5);
     \filldraw[cyan] (1,4) rectangle (0.5,3.5);
     \filldraw[cyan] (2,4) rectangle (1.5,3.5);
     \filldraw[cyan] (2,3) rectangle (2.5,3.5);
     \filldraw[cyan] (2.5,2.5) rectangle (3,3); 
     \filldraw[cyan] (3,2) rectangle (3.5,2.5); 
     \filldraw[cyan] (3.5,1.5) rectangle (4,2);
     \filldraw[pink] (0,0.5) rectangle (0.5,1);
     \filldraw[pink] (0.5,1) rectangle (1,1.5);
     \filldraw[pink] (1,1.5) rectangle (1.5,2);
     \filldraw[pink] (1.5,2) rectangle (2,2.5);
     \filldraw[pink] (2,2.5) rectangle (2.5,3); 
     \filldraw[pink] (2.5,3) rectangle (3,3.5); 
     \filldraw[pink] (3,3.5) rectangle (3.5,4);
      \foreach \i in {1,2,4,5,6,7}
     {\draw (0.5*\i-0.25,4.25) node{\i};} 
     \draw (1.25,4.25) [cyan] node{3};
     \draw (3.75,4.25) [pink] node{8};
     \draw[step=0.5] (0,0) grid (4,4);
     \draw (0,4.5) -- (4,4.5);
     \draw (0,4) -- (0,4.5);
     \draw (0,4) -- (0,4);
     \draw (4,4) -- (4,4.5);
    \end{tikzpicture}
\end{minipage}%
\begin{minipage}{.5\linewidth}
\centering
\begin{tikzpicture}
     \draw[step=0.5] (0,0) grid (4,4);
     \draw (0,4.5) -- (4,4.5);
     \draw (0,4) -- (0,4.5);
     \draw (4,4) -- (4,4.5);
      \filldraw (0.75,3.25) circle (0.15);
     \filldraw (3.75,2.25) circle (0.15);
     \draw (0.75,3.25) -- (1.75,4.25);
     \draw (3.75,2.25) -- (1.75,4.25);
      \foreach \i in {1,2,3,4,5,6,7,8}
     {\draw (0.5*\i-0.25,4.25) node{\i};} 
    \end{tikzpicture}
\end{minipage}
\caption{Illustrations of the reflecting chessboard}
\label{figure:reflecting}
\end{figure}

In this paper, we prove that reflecting $n$-queens configurations exist for all sufficiently large $n$. This question was first proposed by Klarner \cite{Kl67} in 1967 as an alternative interpretation of a number theory problem of Slater \cite{slater1963}, which we introduce in detail in the following section.  The problem also appears in the book of Guy \cite{Gu04} on unsolved problems in number theory and the survey of Bell and Stevens \cite{BELL20091}.

\subsection{A Related Number Theory Problem} \label{sect:number-theory}
For $n \in \mathbb N$, we let $[n] = \{1, \dots, n\}$.
In 1962, Shen and Shen \cite{shen1962} proposed the following research question: for which $n\ge 3$ is it possible to divide the elements in the set $[2n]$ into pairs $(a_i,b_i)$, such that for all $i\in [n]$, the $2n$ sums and differences $b_i\pm a_i$ are distinct. This problem was solved by Huff \cite{Huff1973OnPO} using a number theoretic approach. In 1963, Slater \cite{slater1963} proposed a more restricted version of this problem, which states: for which values of $n$ is it possible to form pairs $(1,a_1), (2,a_2), \ldots, (n, a_n)$, where $\{a_1,\ldots,a_n\}=\{n+1,n+2,\ldots, 2n\}$, such that for all $i\in [n],$ the $2n$ sums and differences $a_i\pm i$ are all distinct. Slater noted that there is no solution to the problem when $n=2,3,$ and $6$, and conjectured that solutions exist for all other $n.$ Klarner \cite{Kl67} extended this line of inquiry by proposing the question on the existence of reflecting $n$-queens configurations. Klarner showed that Slater's problem has a solution for a given $n$ if and only if there exists a reflecting $n$-queens configuration. To see the equivalence, consider an $n\times n$ reflecting chessboard with rows labeled from top to bottom starting at the row below the reflecting strip as $1,2,\ldots,n$, and columns labeled from left to right as $n+1, n+2, \ldots, 2n.$ Two queens placed on row $i$ column $j$ and on row $i'$ column $j'$ attack each other if they are: \begin{itemize}
    \item in the same row, i.e.,\ $i=i'$, 
    \item in the same column, i.e.,\ $j=j'$, 
    \item on the same ``plus-diagonal'', i.e.,\ $i+j =i'+j'$, 
    \item on the same ``minus-diagonal'', i.e.,\ $i-j =i'-j'$, or
    \item on the same reflecting diagonal and not on the same ``plus-diagonal'' or ``minus-diagonal'', i.e.,\ $i+j =j'-i'$ or $j-i =i'+j'$.
\end{itemize} 
A solution to Slater's version of the problem avoids all these constraints, thus yielding a reflecting $n$-queens configuration, and vice versa. Therefore, there is a one-to-one correspondence between the solutions to Slater's problem for $n$ and reflecting $n$-queens configurations.

In \cite{Kl67}, Klarner showed that reflecting $n$-queens configurations exist for $n=4,5,7$, and $8$. Subsequently, in \cite{sebastian1969}, Sebastian extended this result for $n=9,\ldots, 27.$ Our goal is to establish the existence of reflecting $n$-queens configurations for all sufficiently large $n$. Before presenting the proof, we formulate the problem mathematically and introduce some necessary definitions.

\subsection{Algebraic Formulation}
We formulate the $n\times n$ chessboard as $[n]\times [n]$. We label the rows from top to bottom with $1,2,\ldots,n$, and the columns from left to right with $1,2,\ldots,n$. In the following definition, we define necessary terminologies regarding the chessboard mathematically. 

\begin{definition}\label{def:reflecting-chessboard} Consider a chessboard $[n]\times [n]$.
    \begin{itemize}
        \item  For $i\in [n]$, define \emph{row} $i$ to be $R_i = \{(i,j): j\in[n]\}$. Let $\cR = \{R_i:i\in[n]\}$ denote the set of rows. 
        \item For $j\in [n]$, define \emph{column} $j$ to be  $C_j = \{(i,j): i\in[n]\}$. Let $\cC = \{C_j:j\in[n]\}$ denote the set of columns. 
        \item For $k\in \{-n,\ldots, 0, \ldots, n\}$, define the \emph{$k$th plus-diagonal} to be  $D_k^+ = \{(i,j)\in [n]\times [n]: i+j-(n+1)=k\}$, and the \emph{$k$th minus-diagonal} to be  $D_k^- = \{(i,j)\in [n]\times [n]: i-j=k\}$. Let $\cD = \{D_k^+:k\in\{-(n-1),\ldots, 0,\ldots, n-1\}\} \cup \{D_k^-:k\in\{-(n-1),\ldots, 0,\ldots, n-1\}\}$ denote the set of non-empty diagonals.
        \item A \emph{reflecting diagonal} is defined as $RD_\ell = D_{\ell-(n+1)}^+\cup D_{-\ell}^-$ for $\ell\in [n]$. A \emph{non-reflecting diagonal} is a diagonal in $\cD$ that is not part of a reflecting diagonal. The set of \emph{reflecting diagonals} is denoted by $\mathcal{RD}=\{RD_\ell:\ell\in [n]\}$, and the set of \emph{non-reflecting diagonals} is denoted by  $\mathcal{ND}=\{D_k^+: k\in\{0,\ldots, n-1\}\} \cup \{D_k^-: k\in\{0,\ldots, n-1\}\}$.
        \item A \emph{line} is defined to be an element in the set $\cL=\cR\cup \cC\cup \mathcal{RD} \cup \mathcal{ND}$.
    \end{itemize}
   Although each of these definitions technically depends on $n$, there will be no ambiguity.
\end{definition}
Note that $RD_\ell$ is the union of the two diagonals (one of which may be empty) whose extensions intersect the reflecting strip at the $\ell$th slot.  

Recall that a reflecting $n$-queens configuration is a placement of $n$ queens on an $n \times n$  reflecting chessboard such that no two queens are contained in the same line. We prove the existence of reflecting $n$-queens configurations for all large enough $n$, thereby resolving both Slater's and Klarner's questions for all but finitely many $n$.

\begin{theorem}\label{thm:main-thm}
 A reflecting n-queens configuration exists for all sufficiently large n.
\end{theorem}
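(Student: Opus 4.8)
The plan is to recast the problem in the equivalent pairing language of Slater and then produce the desired pairing through a rainbow matching argument combined with absorption. Writing a configuration as a bijection $a \colon [n] \to \{n+1, \dots, 2n\}$, I want the $n$ sums $s_i = i + a_i$ and the $n$ differences $d_i = a_i - i$ to be $2n$ pairwise distinct integers: distinctness of the $s_i$ is exactly the plus-diagonal condition, distinctness of the $d_i$ is the minus-diagonal condition, and disjointness of $\{s_i\}$ from $\{d_i\}$ is precisely the reflecting condition, since a sum coinciding with a difference is what produces a reflecting attack. Viewing cells as edges of $K_{n,n}$ between rows and columns, a configuration is a perfect matching $M$; I would assign to each edge $e = (i,j)$ the two colors $s(e) = i+j$ and $d(e) = j - i$ drawn from one common palette of integers, and seek a perfect matching in which all $2n$ colors appearing on $M$ are distinct. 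The key structural point is that, for any fixed value $v$, the set of cells with $s(e) = v$ (a plus-diagonal) and the set of cells with $d(e) = v$ (a minus-diagonal) each form a matching in $K_{n,n}$, so every color class is a matching; this is exactly the proper-coloring setting in which the rainbow matching lemma of Glock, Munhá Correia, and Sudakov operates.

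First I would fix a small, carefully chosen family of cells, an \emph{absorber}, capable of repairing a bounded number of leftover rows, columns, and color coincidences, and set it aside. On the remainder of the board I would apply the rainbow matching lemma to obtain a matching covering all but $o(n)$ of the rows and columns whose sum-colors and difference-colors are pairwise distinct; to meet the lemma's hypotheses I would pass to a sufficiently dense and balanced subgraph and discard the short diagonals clustered near the two corners of the board, where color classes are small and where, by range considerations, a sum and a difference are most prone to coincide (sums lie in $\{n+2, \dots, 3n\}$ and differences in $\{1, \dots, 2n-1\}$, so coincidences can occur only in the overlap window $\{n+2, \dots, 2n-1\}$). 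Finally I would invoke the absorber to extend the near-perfect rainbow matching to a perfect one, completing the unused rows and columns while preserving distinctness of all sums and differences, and thereby obtaining a reflecting $n$-queens configuration for all large $n$.

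The main obstacle is the reflecting condition itself. In the ordinary $n$-queens problem the plus- and minus-diagonal constraints act on disjoint sets of integers and can be treated as two essentially independent proper colorings; here the requirement that no sum equal any difference couples the two diagonal families into a single global constraint, forcing the $n$ sums and $n$ differences to compete for the same integer labels. My way around this is to merge the sum- and difference-colors into one shared palette so that the rainbow matching lemma enforces $\{s_i\} \cap \{d_i\} = \emptyset$ automatically, rather than attempting to satisfy the reflecting condition after the fact. The accompanying technical difficulty is that the overlap window $\{n+2, \dots, 2n-1\}$ where coincidences are possible is precisely populated by the short corner diagonals, so I expect the delicate part of the argument to be verifying that, after reserving the absorber and deleting these boundary diagonals, the remaining colored subgraph is still dense and balanced enough for the lemma to apply while the absorber retains enough flexibility to handle the reserved corner cells.
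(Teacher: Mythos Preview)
Your framing via Slater's pairing and the idea to merge sum- and difference-colors into a single palette are both correct and match what the paper does (this merger is precisely the paper's notion of a \emph{reflecting diagonal}). However, the proposal misidentifies the obstruction to applying the rainbow matching lemma, and this is a genuine gap.

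You write that the difficulty lies with the ``short diagonals clustered near the two corners'' in the overlap window. In fact the opposite is true: every merged color class in the overlap window $\{n+2,\dots,2n-1\}$ has exactly $n-1$ cells (the plus-part contributes $v-n-1$ cells and the minus-part $2n-v$, summing to $n-1$), and the two main diagonals have $n$ cells. Thus in the full graph $K_{n,n}$ there are roughly $n$ colors whose degree equals or nearly equals the vertex degree $n$, so condition~(2) of the lemma fails everywhere, not just near the corners. Deleting corner cells does nothing to fix this. The paper's actual device is a carefully designed weighting $w:[n]\times[n]\to[17/24,1]$, constant on a $3\times 3$ grid of boxes, such that every row and column has weight $\approx 5n/6$ while every diagonal (reflecting or not) has weight at most $59n/72$; random sampling with these weights then yields a subgraph in which all vertex degrees are $\approx 5n/6$ and all color degrees are at most $(1-1/120)\cdot 5n/6$. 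Finding such a weighting is the heart of the argument and is not addressed by your plan.

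Two smaller points. First, the merged color classes are \emph{not} matchings: a reflecting diagonal can meet a given row or column twice (once via its plus-part, once via its minus-part), so the coloring is $2$-bounded rather than proper, and one needs the $b$-bounded form of the lemma. Second, since that lemma already yields a \emph{perfect} rainbow matching, the absorber is unnecessary; the paper dispenses with absorption entirely.
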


In Section~\ref{sect:rainbow-matching}, we discuss the connection between the reflecting $n$-queens problem and the rainbow matching problem, highlighting how insights from the rainbow matching problem can help us with the understanding of the reflecting $n$-queens problem. Section~\ref{sect:proof} provides the necessary tools for proving the main theorem and presents the proof of the main theorem. 

Note that our proof will only work when $n$ is sufficiently large. It remains open to show that  reflecting $n$-queens configurations exist for all $n$. Moreover, we could ask the question about how many possible reflecting $n$-queens configurations there are for any integer $n$. Further variations of the $n$-queens problem can be found in the survey on the $n$-queens problems by Bell and Stevens \cite{BELL20091}. 

\section{The Rainbow Matching Lemma}\label{sect:rainbow-matching}

The proof of our main theorem is inspired by the work of Glock, Munhá Correia, and Sudakov \cite{glock2022nqueens} on the $n$-queens completion problem, which asks under which condition a given partial configuration can be extended to an $n$-queens configuration. To answer this question, Glock, Munhá Correia, and Sudakov presented the ``rainbow matching lemma'' which allowed them to find perfect rainbow matchings in certain graphs. A \emph{rainbow matching} in an edge-colored graph is a matching in which all edges have distinct colors, and a \emph{perfect matching} in a graph is a matching that saturates every vertex. We apply a generalized version of the ``rainbow matching lemma'' to prove our main theorem.  Historical results and recent developments on rainbow matching problems are discussed in \cite{glock2022nqueens}. 

We can translate the problem of the existence of a reflecting $n$-queens configuration into a rainbow matching problem. Given an $n\times n$ reflecting chessboard, we construct a complete bipartite graph $K_{n,n}$ with one part $\cR=\bigcup_{i=1}^n\{R_i\}$ representing the $n$ rows of the  chessboard and the other part $\cC=\bigcup_{i=1}^n\{C_i\}$ representing the $n$ columns. An edge $\{R_i,C_j\}$ corresponds to square $(i,j)$ on the chessboard. A matching in the graph corresponds to a placement of queens in the reflecting chessboard such that no two queens are in the same row or column. We view the diagonals as colors and assign to each edge $\{R_i, C_j\}$ two colors corresponding to the two diagonals containing square $(i,j)$, as follows:
\begin{itemize}
    \item If $0\le i+j-(n+1)\le n$, assign $D_{i+j-(n+1)}^+$ to $\{R_i,C_j\}$. Otherwise, assign $RD_{i+j}$ to $\{R_i,C_j\}$.
    \item If $0\le i-j\le n$, assign $D_{i-j}^-$ to $\{R_i,C_j\}$.
     Otherwise, assign $RD_{j-i}$ to $\{R_i,C_j\}$.
\end{itemize} 
A reflecting $n$-queens configuration corresponds to a perfect rainbow matching, a matching that saturates all the vertices in the graph and in which the color sets of the edges in the matching are pairwise disjoint. 
Similarly, the number theory problem proposed by Shen and Shen \cite{shen1962} discussed in Section~\ref{sect:number-theory} can be formulated as determining whether a particular coloring of $K_{2n}$ has a perfect rainbow matching.
To understand the existence of a perfect rainbow matching in our situation, we need the generalized version of the rainbow matching lemma in \cite{glock2022nqueens}. Before stating the generalized rainbow matching lemma, we need the following definitions. 

\begin{definition} 
    Let $G$ be a graph. A \emph{t-fold} edge-coloring of $G$ is an assignment of sets of $t$ colors to the edges of $G$. This coloring is called \emph{b-bounded} if, for every vertex $v$, each color appears in at most $b$ edges incident to $v$, and any pair of colors appears on at most $b$ edges together. In particular, a $t$-fold coloring is \emph{proper} if the edges at each vertex have pairwise disjoint color sets, and is \emph{linear} if for every pair of colors, there is at most one edge that contains both colors. A subgraph $H$ of $G$ is \emph{rainbow} if for any pair of edges of $H$, their color sets are disjoint. The \emph{degree} of a color is the number of edges whose color set contains that color. 
    
\end{definition}

In the original version of the ``rainbow matching lemma'', Glock, Munhá Correia, and Sudakov \cite[Lemma 2.2]{glock2022nqueens} considered proper and linear $t$-fold coloring of graphs. They remarked that the proper and linear conditions can be weakened to the $t$-fold coloring being $b$-bounded. This leads us to the following generalized version of the rainbow matching lemma.

\begin{lemma}[Glock, Munhá Correia, and Sudakov \cite{glock2022nqueens}]\label{lemma:rainbow}
   For all $\alpha>0$ and $b,t\in \mathbb{N}$, there exists $\varepsilon>0$ such that the following holds for all sufficiently large $n$. Let $G$ be a bipartite graph with parts $A, B$ of size $n$ with a $b$-bounded $t$-fold edge-coloring. If there exists some $d$ such that 
   \begin{enumerate}
    \item every vertex has degree $(1\pm \varepsilon)d$,
    \item every color has degree at most $(1-\alpha)d$, and 
    \item there are at least $\alpha n^2$ edges between any two sets $A'\subseteq A$ and $B'\subseteq B$ of size at least $(1-\alpha) d$,
   \end{enumerate}
   then $G$ has a perfect rainbow matching. 
\end{lemma}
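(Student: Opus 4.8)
The plan is to prove the lemma by the semi-random (nibble) method followed by a completion step, the standard route for producing near-optimal rainbow matchings under pseudorandomness hypotheses. The guiding idea is that conditions (1) and (2) provide enough ``room'' to run a random greedy process that builds a rainbow matching covering all but a vanishing fraction of the vertices, while condition (3) --- the expansion-type guarantee that any two linear-sized subsets span at least $\alpha n^2$ edges --- is exactly what is needed to mop up the leftover vertices into a \emph{perfect} rainbow matching. Concretely, I would first fix the output $\varepsilon = \varepsilon(\alpha, b, t)$ small relative to $\alpha$, and reserve at the very start a small random ``absorbing'' structure: a sparse collection of vertex-disjoint, color-disjoint edges together with a reserve of colors, chosen so that, with high probability, it can be rearranged to absorb any sufficiently small deficiency left by the nibble. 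The dense-edge hypothesis (3) is invoked here to show that such a robust absorber exists.

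For the nibble itself I would run the process over a bounded number of rounds on a small time-scale. In each round, activate each surviving edge independently with probability $p/d$ for a small constant $p$, and retain an activated edge only if no other activated edge shares an endpoint or a color with it; the retained edges form a rainbow matching, which we add to the partial matching and then delete the newly covered vertices, the used colors, and all edges meeting them. The $t$-fold, $b$-bounded structure enters precisely here: since each color lies on at most $b$ edges at a vertex and any pair of colors co-occurs on at most $b$ edges, the expected number of conflicts per activated edge is $O(t b p)$, so for $p$ small a constant fraction of activated edges survive and the process makes steady progress. The technical heart is to show, via the differential-equations method or Azuma/McDiarmid concentration on a vertex-exposure martingale, that the residual graph stays pseudorandom throughout: after rescaling, each uncovered vertex still has degree $(1 \pm o(1)) \hat d$ into the unused colors, and each unused color still has degree at most $(1 - \alpha + o(1)) \hat d$, where $\hat d$ tracks the shrinking pool of available edges. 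Iterating until the uncovered fraction drops below a threshold $\delta \ll \alpha$ leaves a rainbow matching $M_0$ missing only $\delta n$ vertices on each side.

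Finally I would complete $M_0$ to a perfect rainbow matching using the reserved absorber together with condition (3). The uncovered sets $A' \subseteq A$ and $B' \subseteq B$ are small but must be matched using only colors not yet consumed; hypothesis (3) guarantees that between the relevant linear-sized sets there remain at least $\alpha n^2$ edges, giving enough freedom to route the leftover through the absorber --- either by directly matching $A'$ to $B'$ along edges carrying fresh colors, or by short rotations that swap a leftover vertex into the absorbing structure and release a compatible edge. Setting up this completion as a rainbow version of a Hall/expansion argument, powered uniformly by (3), yields the perfect rainbow matching.

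I expect the main obstacle to be the concentration analysis in the nibble under the weakened, merely $b$-bounded coloring. With proper and linear colorings the conflicts at a vertex and between color pairs are $O(1)$, whereas allowing up to $b$ repetitions forces one to bound the variance contributions of correlated edges --- those sharing one of the up-to-$b$ copies of a color --- and to verify that these correlations do not destroy the quasi-regularity of color degrees, the condition most sensitive to such effects. A secondary difficulty is ensuring the absorber reserved up front survives the color depletion of the nibble; this requires choosing it to be robust, i.e.\ to work for \emph{every} possible small leftover pair $A', B'$, which is again where hypothesis (3) must be applied uniformly over all candidate sets.
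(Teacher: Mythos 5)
This lemma is not proved in the paper at all: Dai and Kelly import it as a black box from Glock, Munh\'a Correia, and Sudakov, who proved the proper-and-linear version and remarked that it extends to $b$-bounded colorings. So the only meaningful comparison is with the proof in that cited work, and at the level of strategy your outline (a semi-random nibble producing an almost-perfect rainbow matching, followed by a completion step powered by hypothesis (3)) does match the general paradigm used there. Incidentally, the obstacle you flag as the ``main'' one is the milder of the two: relaxing proper/linear to $b$-bounded only inflates the conflict counts in the nibble by a factor of $b$ (pair degrees at most $b$ instead of $1$), which such analyses absorb routinely.

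The genuine gap is in the completion step, which is where the real content of the lemma lies. You invoke hypothesis (3) to match the leftover sets $A'$, $B'$, but after the nibble these sets have size $\delta n$ with $\delta \ll \alpha$ --- far below the threshold $(1-\alpha)d$ --- so (3) says nothing about them, and there may be no usable edges between them at all. The way (3) actually enters is through augmenting structures: for an uncovered pair $(a,b)$, one applies (3) to the neighborhoods $N(a)$, $N(b)$ (or to their images under the partial matching, which have size at least $(1-\varepsilon)d - \delta n \geq (1-\alpha)d$), obtaining at least $\alpha n^2$ candidate edges each of which closes a length-$3$ or length-$5$ alternating swap; your ``short rotations'' sentence gestures at this but never sets it up, and your ``absorber of vertex-disjoint, color-disjoint edges'' is not constructed or shown to contain gadgets for every possible pair. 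Second, the color bookkeeping in these swaps cannot be handled by the deterministic reservation you describe: the partial matching occupies roughly $tn$ colors, and since $btn$ can exceed $d$, a union bound cannot rule out that \emph{every} edge at an uncovered vertex $a$ carries a used color. One genuinely needs that the nibble output is random enough that each fixed color is consumed with probability bounded away from $1$ (roughly $1-\alpha$), so that a constant fraction (about $\alpha^{2t}$) of the $\alpha n^2$ candidate swaps remain color-compatible; proving and then exploiting this distributional property of the nibble is the technical heart of the argument, and it is absent from the proposal. As written, the proposal is a plausible plan whose two hardest steps are asserted rather than proved.
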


Recall that our goal is to find perfect rainbow matchings in complete bipartite graphs. The lemma tells us that if we have a bipartite graph such that all the vertices have roughly the same degrees, the degree of each color is a bit smaller than that of the vertices, and there are many edges between any sufficiently large sets of vertices, then we can find a perfect rainbow matching. Note that the first and the third conditions are trivially satisfied by the complete bipartite graph $K_{n,n}$.  However, the reflecting queens coloring does not satisfy the second condition as the main diagonals $D_0^+$ and $D_0^-$ have size $n$, and thus the two corresponding colors have degree $n$. The reflecting diagonals also have size $n - 1$.  Nevertheless, on average, the colors have degree $3n/4.$ Thus, our goal is to find a subgraph $G$ of $K_{n,n}$, such that the degree of each color is significantly smaller than that of the vertices. Now, to prove the existence of an $n$-queens configuration, it suffices to show that the subgraph $G$ has a perfect rainbow matching. We will find such a subgraph $G$ in the following section and complete the proof of the main theorem.

\section{Proof of Theorem~\ref{thm:main-thm}}\label{sect:proof}

In this section, we prove Theorem \ref{thm:main-thm}. As mentioned in the previous section, our goal is to find a subgraph of $K_{n,n}$ with a $2$-fold edge-coloring, where edges represent the squares on the reflecting chessboard and the colors of the edges represent the diagonals in which the corresponding square is contained, to which we can apply Lemma~\ref{lemma:rainbow}. In particular, we want to find a subgraph such that the degree of the colors is significantly smaller than the degree of the vertices. Equivalently, we want to find a subset $S$ of the reflecting chessboard such that the diagonals contain significantly fewer squares than the rows and the columns. We prove the existence of such a subset $S$ in the following lemma. 

\begin{lemma}\label{lemma:subset}
    For all sufficiently large $n$, there exists a subset $S\subseteq [n]\times [n]$ such that 
    \begin{enumerate}
        \item each row and each column has $(1\pm n^{-1/4})5n/6$ squares in $S$,
        \item each non-reflecting diagonal and each reflecting diagonal has at most $ 119n/144$ squares in $S$, and 
        \item for every $A, B \subseteq [n]$ satisfying $|A|, |B| \geq 119n/144$, we have $|S \cap (A \times B)| \geq n^2 / 120$.
    \end{enumerate}
\end{lemma}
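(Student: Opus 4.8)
The plan is to realize $S$ as the outcome of independent random rounding of a carefully chosen fractional template, and then to verify the three conditions by concentration. The constants are dictated by what we must feed into Lemma~\ref{lemma:rainbow} (vertex degree $d=5n/6$ and $\alpha=1/120$), so concretely I would first seek a weighting $p\colon [n]\times[n]\to[0,1]$ whose row sums and column sums all equal $5n/6$, and such that every plus-diagonal, every minus-diagonal, and every reflecting diagonal has $p$-mass at most $\tfrac{119n}{144}-cn$ for a fixed constant $c>0$. Given such a $p$, I would put each square $(i,j)$ into $S$ independently with probability $p(i,j)$. For every line $L$, the count $|S\cap L|$ is a sum of independent indicators with mean equal to the $p$-mass of $L$, so a Chernoff bound gives $\big||S\cap L|-\sum_{(i,j)\in L}p(i,j)\big|=O(\sqrt{n\log n})=o(n^{3/4})$ with probability $1-o(1/n)$. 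A union bound over the $2n$ rows and columns then yields condition~(1), since the fractional row/column sums are exactly $5n/6$ and the error is $o(n^{3/4})=o(n^{-1/4}\cdot 5n/6)$; a union bound over the $O(n)$ diagonals yields condition~(2), since the fractional diagonal masses sit a constant $cn$ below $119n/144$ while the fluctuation is only $o(n)$.

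Condition~(3) I would deduce directly from condition~(1), avoiding any union bound over the exponentially many pairs $A,B$. Indeed, once every row of $S$ has at least $(1-n^{-1/4})\tfrac{5n}{6}$ squares, then for any $B$ with $|B|\ge \tfrac{119n}{144}$ and any row $i\in A$,
\[
|S\cap(\{i\}\times B)|\ \ge\ (1-n^{-1/4})\tfrac{5n}{6}-\big(n-|B|\big)\ \ge\ \tfrac{120n}{144}(1-n^{-1/4})-\tfrac{25n}{144}\ \ge\ \tfrac{90n}{144}
\]
for large $n$. Summing over the $|A|\ge \tfrac{119n}{144}$ rows of $A$ gives $|S\cap(A\times B)|\ge \tfrac{119}{144}\cdot\tfrac{90}{144}\,n^2>n^2/120$, which is condition~(3) with room to spare.

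The hard part is the construction of the template $p$ meeting the diagonal bounds while keeping both marginals pinned at $5n/6$, and the genuine obstacle here is the reflecting diagonals. A first instinct is $p\equiv 5/6$, which already gives the correct marginals and keeps every \emph{short} ordinary diagonal comfortably below $119n/144$. However, every reflecting diagonal has length exactly $n-1$, so uniform density assigns it mass $\tfrac{5}{6}(n-1)=\tfrac{120}{144}n-o(n)>\tfrac{119n}{144}$; thus \emph{every one} of the $n$ reflecting diagonals must have its density shaved by roughly $n/144$, and likewise for the $\Theta(n)$ longest plus- and minus-diagonals. Note one cannot simply push every diagonal below density $5/6$, since the plus-diagonals (and the minus-diagonals) partition the board and hence have weighted-average density exactly $5/6$; for the ordinary diagonals the needed reductions must be paid for by the short diagonals near the corners, whose mass is far below the cap.

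To carry out the reductions I would work with the sign pattern of $i-j$ and $i+j-(n+1)$, which splits the board into four triangles. A square lies on a reflecting diagonal through its plus-diagonal exactly when $i+j\le n$ and through its minus-diagonal exactly when $i<j$; in particular the ``southern'' triangle $\{(i,j):i\ge j,\ i+j\ge n+1\}$ meets no reflecting diagonal. The subtlety is that the four constraint families pull mass in competing directions: relieving the reflecting diagonals calls for moving mass toward the southern triangle, yet that triangle abuts the two long central ordinary diagonals, which must themselves be relieved. I would therefore look for a perturbation $p=5/6+g$ with $g$ having vanishing row and column sums (to preserve the marginals) whose sum along every long plus-diagonal, every long minus-diagonal, and every reflecting diagonal is at most $-cn$, transporting mass into the part of the southern triangle that is far from both central diagonals. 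Since each of the $O(n)$ binding lines has length $\Theta(n)$ yet needs to shed only $\Theta(n)$ of its mass, while the total available slack is $\Theta(n^2)$, I expect such a $g$ to exist, and I would prove this either by exhibiting an explicit profile or, more robustly, by a flow/Hall-type feasibility argument on the square–line incidence bipartite graph. Establishing this feasibility against all three diagonal families simultaneously — the reflecting ones being the bottleneck, as each is the union of a full plus- and a full minus-diagonal — is where I expect essentially all of the difficulty to lie.
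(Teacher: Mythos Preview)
Your framework---build a fractional template, round each square independently, and verify (1)--(2) by Chernoff with a union bound over the $O(n)$ lines---is exactly the paper's. Your handling of~(3) is genuinely different and cleaner: the paper instead uses the pointwise bound $w\ge 17/24$ to get $\mathbb{E}\bigl[|S\cap(A\times B)|\bigr]\ge (119/144)^2(17/24)\,n^2$, applies Chernoff for failure probability $e^{-\Omega(n^2)}$, and union-bounds over all $\le 4^n$ pairs $(A,B)$. Your deduction of~(3) directly from~(1) is valid and avoids both the exponential union bound and any need for a lower bound on the template values.

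The one genuine gap---which you yourself flag---is that the template is never actually constructed. Your heuristics are correct (the reflecting diagonals are the bottleneck; mass should be pushed toward the region $\{i\ge j,\ i+j\ge n+1\}$, which meets no reflecting diagonal), but neither an explicit profile nor a feasibility argument is carried out, and a ``flow/Hall-type'' argument simultaneously handling rows, columns, both diagonal families, and the $n$ reflecting diagonals is not an off-the-shelf result. The paper fills this gap with an explicit piecewise-constant weighting on a $3\times 3$ grid of boxes, with values
\[
\begin{pmatrix} 43/48 & 17/24 & 43/48\\ 41/48 & 19/24 & 41/48\\ 3/4 & 1 & 3/4 \end{pmatrix},
\]
and checks by a short case analysis that every row and column has weight $5n/6+O(1)$ while every diagonal and every reflecting diagonal has weight below $59n/72<119n/144$. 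Your geometric intuition is vindicated---the maximal weight $1$ sits in the bottom-middle box, inside the region free of reflecting diagonals---but producing such a construction is the substantive content of the lemma, and your proposal leaves it open.
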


To prove Lemma~\ref{lemma:subset}, we first define a weight function $w$ on $[n]\times [n]$ such that the sum of the weight of each line lies within a particular range, as detailed in Lemma~\ref{lemma:weighting}. Using this weight function, we construct the desired subset $S$ of $[n]\times [n]$ by including every square $(i,j)\in [n]\times [n]$ independently with probability $w((i,j))$. The expected degree of each line corresponds to its total weight under $w$, and with high probability, the actual degree will be close to its expectation. We can then apply concentration inequalities and the union bound to show the desired properties.

\begin{lemma}\label{lemma:weighting}
    For all $n\in \mathbb{N}$, there exists a weighting $w:[n]\times [n]\to [17/24,1]$ such that 
    \begin{enumerate}
        \item the sum of the weights in each row and each column is $5n/6\pm 10/3$, \label{lemma:weighting1}
        \item the sum of the weights in each non-reflecting diagonal is less than $59 n/72,$ and \label{lemma:weighting2} 
        \item the sum of the weights in each reflecting diagonal is less than $59n/72.$\label{lemma:weighting3}
    \end{enumerate}
\end{lemma}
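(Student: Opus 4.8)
The plan is to design $w$ by thinking of the board in the rescaled coordinates $x=i/n,\ y=j/n\in[0,1]$, where the two main diagonals $\{x=y\}$ and $\{x+y=1\}$ cut the square into four triangles. A first observation organizes the work: if $w$ were the constant $5/6$ then every row and column would have sum exactly $5n/6$, and a line of length $L$ would have weight $5L/6$, which exceeds $59n/72$ precisely when $L>59n/60=n-n/60$. Hence among the non-reflecting diagonals only the near-central ones, namely $D_k^{\pm}$ with $|k|<n/60$, are too heavy, and I would fix these by lowering $w$ on a band of width of order $n/60$ about each main diagonal and raising it slightly on the complementary cells to restore the row and column averages; one checks that $w$ stays in $[17/24,1]$. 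The reason these bands do not spoil property~\ref{lemma:weighting1} is that the two bands coincide along the central rows and are each truncated to half-width at the boundary rows, so the number of lowered cells in each row is essentially constant across rows; this simultaneously secures property~\ref{lemma:weighting2} for the central non-reflecting diagonals while the remaining ones are short enough to be automatic.

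The main obstacle is property~\ref{lemma:weighting3}. In contrast to the non-reflecting case, \emph{every} reflecting diagonal $RD_\ell=D^{+}_{\ell-(n+1)}\cup D^{-}_{-\ell}$ has length exactly $n-1>59n/60$ for large $n$, so each of the $n$ reflecting diagonals is too heavy at the base value and must be actively lightened by about $n/72$; moreover $RD_\ell$ glues together a short and a long sub-diagonal lying in separate parts of the board, and for the middle values $\ell\approx n/2$ neither piece is central, so these reflecting diagonals are untouched by the two bands above. All of them lie in the region $\{y>x\}\cup\{x+y<1\}$, which doubly covers the left triangle $\{x<y<1-x\}$, so lowering $w$ there reduces the reflecting sums efficiently. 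The difficulty is that the top rows ($x<1/2$) lie \emph{entirely} in this region and contain no cell of the non-reflecting corner, so the weight removed for the sake of the reflecting diagonals must be given back within those same rows; the top rows are therefore the binding case for reconciling properties~\ref{lemma:weighting1} and~\ref{lemma:weighting3}.

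To carry this out I would define $w$ to be smaller (down toward the floor $17/24$) on the doubly-covered reflecting region and larger (up toward $1$) on the non-reflecting corner $\{y<x,\ x+y>1\}$, varying $w$ piecewise-linearly so that each individual row and column still integrates to $5/6$ up to an $O(1/n)$ error, and then transfer this continuous profile to the integer grid. Before optimizing the pointwise bounds I would sanity-check feasibility through the averaged form of property~\ref{lemma:weighting3} obtained by summing over $\ell$, namely $\tfrac1n\sum_{\ell}\sum_{(i,j)\in RD_\ell}w(i,j)$: minimizing this over admissible $w$ reduces to bounding $\iint w\,(\mathbf 1[y>x]+\mathbf 1[x+y<1])$, whose optimum is about $0.80\,n$, comfortably below $59n/72\approx 0.819\,n$, and this is exactly the margin that the constants $17/24$, $5/6$, and $59/72$ are tuned to provide. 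Granting such a $w$, verifying properties~\ref{lemma:weighting1}--\ref{lemma:weighting3} reduces to a finite family of one-variable sum estimates, one per family of lines, with the $\pm 10/3$ slack in property~\ref{lemma:weighting1} absorbing the $O(1)$ boundary- and parity-rounding errors incurred in passing to the grid. I expect this last reconciliation---lightening all $n$ reflecting diagonals while returning the removed weight inside the very rows that are fully contained in the reflecting region, all within the narrow $59/72$ versus $5/6$ margin---to be the heart of the proof.
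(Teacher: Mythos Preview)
Your proposal is a strategy sketch, not a proof: you never actually write down a weighting $w$, and you explicitly flag the missing step yourself (``Granting such a $w$\ldots'', ``I expect this last reconciliation\ldots to be the heart of the proof''). The averaged sanity check you perform for property~(\ref{lemma:weighting3}) only bounds $\tfrac{1}{n}\sum_\ell w(RD_\ell)$, which says nothing about individual reflecting diagonals; since the lemma demands the bound for \emph{every} $\ell$, this check is not a substitute for the construction. Your plan also layers several adjustments (diagonal bands of width $\sim n/60$ for~(\ref{lemma:weighting2}), a triangular lowering for~(\ref{lemma:weighting3}), compensating raises for~(\ref{lemma:weighting1})), and you do not verify that these interact consistently---in particular, that the compensation inside the top rows does not push some reflecting diagonal back above $59n/72$, or that after all perturbations $w$ still lands in $[17/24,1]$. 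These are exactly the quantitative details the lemma is about.

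The paper bypasses all of this analytic balancing by writing down a single piecewise-constant $w$ on a $3\times 3$ block partition of $[n]\times[n]$ (constant on each of nine boxes, with values $43/48$, $17/24$, $41/48$, $19/24$, $3/4$, $1$ chosen so that every row and column of boxes sums to $5/2$), and then checking (\ref{lemma:weighting1})--(\ref{lemma:weighting3}) by a handful of explicit arithmetic computations, with a short case analysis for the reflecting diagonals. Your geometric intuition---lighten the upper/left region hit by the reflecting diagonals, load up the bottom-middle region they miss---is exactly what this block pattern encodes (note box~8 has weight $1$ and box~2 has weight $17/24$), but the paper's approach turns the ``heart of the proof'' you anticipate into a direct verification rather than an optimization.
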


\begin{proof}
 
Consider the function $w: [n]\times [n]\to [17/24,1]$ with $$w((i,j)) = \begin{cases}
    43/48 & \text{if } i/(n+1)\in[0,1/3), j/(n+1) \in [0,1/3)\cup (2/3,1] \\
    17/24 & \text{if } i/(n+1)\in[0,1/3), j/(n+1)\in[1/3,2/3]\\
    41/48 & \text{if } i/(n+1)\in[1/3,2/3], j/(n+1) \in [0,1/3)\cup (2/3,1]\\
    19/24 & \text{if } i/(n+1)\in[1/3,2/3], j/(n+1)\in[1/3,2/3]\\
    3/4 & \text{if } i/(n+1)\in(2/3,1], j/(n+1) \in [0,1/3)\cup (2/3,1]\\
    1 & \text{otherwise.}\\
\end{cases}$$ For a line $L\in \cL$, define its weight $w(L)$ to be the sum of the weights of all squares in $L$. The weight function divides the $[n]\times [n]$ grid into 9 \emph{boxes}. We label them $1, \ldots, 9$ from left to right and top to bottom. In Figure~\ref{figure:weighting}, the diagram on the left shows the labeling of the boxes, and the diagram on the right gives the weight of the squares in each box.

\begin{figure}
\begin{minipage}{.5\linewidth}
\centering
    \begin{tikzpicture}
     \draw[step=2] (0,0) grid (6,6);
     \draw (1,5) node[font=\Large]{box 1};
     \draw (3,5) node[font=\Large]{box 2};
     \draw (5,5) node[font=\Large]{box 3};
     \draw (1,3) node[font=\Large]{box 4};
     \draw (3,3) node[font=\Large]{box 5};
     \draw (5,3) node[font=\Large]{box 6};
     \draw (1,1) node[font=\Large]{box 7};
     \draw (3,1) node[font=\Large]{box 8};
     \draw (5,1) node[font=\Large]{box 9};
     \draw (0,6+2/3) -- (6,6+2/3);
     \foreach \i in {0,2/3,4/3,2,10/3,4,16/3,6}
     {\draw (\i,6) -- (\i,20/3);}
     \draw(1/3,19/3) node{1};
     \draw(1,19/3) node{$\ldots$};
     \draw(5/3,19/3) node[font=\Small]{$\frac{n-r}{3}$};
     \draw(17/3,19/3) node{$n$};
     \draw(8/3,19/3) node{$\ldots$};
     \draw(14/3,19/3) node{$\ldots$};
     \draw(11/3,19/3) node[font=\Small]{$\frac{2n+r}{3}$};
\end{tikzpicture}\\
\end{minipage}%
\begin{minipage}{.5\linewidth}
\centering
    \begin{tikzpicture}
    \draw[step=2] (0,0) grid (6,6);
     \draw (1,5) node[font=\huge]{$\frac{43}{48}$};
     \draw (3,5) node[font=\huge]{$\frac{17}{24}$};
     \draw (5,5) node[font=\huge]{$\frac{43}{48}$};
     \draw (1,3) node[font=\huge]{$\frac{41}{48}$};
     \draw (3,3) node[font=\huge]{$\frac{19}{24}$};
     \draw (5,3) node[font=\huge]{$\frac{41}{48}$};
     \draw (1,1) node[font=\huge]{$\frac{3}{4}$};
     \draw (3,1) node[font=\LARGE]{$1$};
     \draw (5,1) node[font=\huge]{$\frac{3}{4}$};
     \draw (0,6+2/3) -- (6,6+2/3);
     \foreach \i in {0,2/3,4/3,2,10/3,4,16/3,6}
     {\draw (\i,6) -- (\i,20/3);}
     \draw(1/3,19/3) node{1};
     \draw(1,19/3) node{$\ldots$};
     \draw(5/3,19/3) node[font=\Small]{$\frac{n-r}{3}$};
     \draw(17/3,19/3) node{$n$};
     \draw(8/3,19/3) node{$\ldots$};
     \draw(14/3,19/3) node{$\ldots$};
     \draw(11/3,19/3) node[font=\Small]{$\frac{2n+r}{3}$};
\end{tikzpicture}
\end{minipage}
\caption{Illustration of the weight function in Lemma~\ref{lemma:weighting}}
\label{figure:weighting}
\end{figure}

Let $r$ be the remainder of $n$ mod 3. Note that boxes $1, 3,7,9$ each have $(n-r)/3$ rows and columns, boxes $4$ and $6$ each have $(n+2r)/3$ rows and $(n-r)/3$ columns, boxes 2 and 8 each have $(n-r)/3$ rows and $(n+2r)/3$ columns, and box 5 has $(n+2r)/3$ rows and columns.

 First, to see (\ref{lemma:weighting1}), observe that $2(43/48)+17/24=2(41/48)+19/24=2(3/4)+1=5/2.$ Since each box has $(n\pm 4)/3$ rows, each row has weight $(5/2)(n\pm4)/3=5n/6\pm 10/3,$ as desired. Similarly, since $43/48+41/48+3/4=17/24+19/24+1=5/2$, each column has weight $(5/2)(n\pm4)/3=5n/6\pm 10/3,$ as desired. 

 Next, we prove (\ref{lemma:weighting2}) by considering the weight of the non-reflecting diagonals. Since the weight function is symmetric, it suffices to consider the non-empty plus diagonals $D_k^+$ for $k\in \{0,1,\ldots, n-1\}$ as $w(D_k^+)=w(D_k^-)$  for all $k\in \{0,1,\ldots, n-1\}$. We claim that the weight of each plus-diagonal is dominated by the weight of $D_0^+$. Indeed, for each $k\in [n]$, the diagonal $D_k^+$ has one fewer square than $D_{k-1}^+$.  Moreover, the number of squares of $D_k^+$ and of $D_{k-1}^+$ in each box differs by at most $1$. Since the weight of each square is between $17/24$ and 1, we have $w(D^+_k)\le w(D^+_{k-1})+2-3(17/24)<w(D^+_{k-1}).$ Hence, $D_0^+$ has the largest weight of the plus-diagonals, as claimed. Note that on $D_0^+,$ there are $(n-r)/3$, $(n+2r)/3$, and $(n-r)/3$ squares in boxes 1, 5, and 9, respectively, so $$w(D_0^+)=\frac{43}{48}\left(\frac{n-r}{3}\right)+\frac{19}{24}\left(\frac{n+2r}{3}\right)+\frac{3}{4}\left(\frac{n-r}{3}\right)=\frac{13}{16}n-\frac{1}{48}r <\frac{59}{72}n.$$ Therefore, the weight of each non-reflecting diagonal is less than $59n/72,$ as desired. 

 Finally, we prove (\ref{lemma:weighting3}) by showing $w(RD_\ell)<59n/72$ for every $\ell\in [n]$. 
 We consider three cases. 

\begin{adjustwidth}{0.5cm}{}
    \textbf{Case 1}: $\ell\le (n-r)/3$ or $\ell\ge (2n+r)/3+1$. 
    
    By the symmetry of the weight function, it suffices to prove the case where $\ell\le (n-r)/3$. 
    We claim that the maximum weight is achieved when $\ell= (n-r)/3$.  Observe that when $\ell\le (n-r)/3$, there are $(n-r)/3-1, \ell, (n+2r)/3-\ell,\ell$, and $(n-r)/3-\ell$ squares of the reflecting diagonal $RD_\ell$ in boxes $1, 2, 5, 6,$ and $9,$ respectively. Comparing the two reflecting diagonals $RD_\ell$ and $RD_{\ell+1}$ for $\ell < (n - r)/3$, we see that $RD_{\ell+1}$ has one more square in each of boxes 2 and 6, and one fewer square in each of boxes 5 and 9. Thus,
    $$w(RD_{\ell+1}) - w(RD_{\ell}) = \frac{17}{24}+\frac{41}{48} - \frac{19}{24} - \frac{36}{48} = \frac{1}{48}.$$ 
    Hence, when $\ell< (n-r)/3$, the weight of $RD_\ell$ increases as $\ell$ increases, and thus is bounded by $$w\left(RD_{(n-r)/3}\right)=\frac{43}{48}\left(\frac{n-r}{3}-1\right)+\frac{17}{24} \left(\frac{n-r}{3}\right)+\frac{3}{4}r+\frac{41}{48}\left(\frac{n-r}{3}\right) = \frac{59}{72}n -\frac{5}{72}r-\frac{43}{48} <\frac{59}{72}n.$$ Therefore, the weight of the reflecting diagonal $RD_\ell$ is less than $59n/72$ when  $\ell\le (n-r)/3$, as desired.
    Since the weight function is symmetric, we have $w(RD_\ell)=w(RD_{n-\ell})$ for all $\ell\in[n]$. Hence, for $\ell\ge (2n+r)/3+1$, the weight of the reflecting diagonal $RD_\ell$ is also less than $59n/72$.
\end{adjustwidth}

\begin{adjustwidth}{0.5cm}{}
    \textbf{Case 2}: $r=2$ and either $\ell=(n-r)/3+1$ or $\ell=(2n+r)/3$. 
    
    For $r=2$ and  $\ell=(n-2)/3+1$, there is one square of the reflecting diagonal $RD_\ell$ in box 5, and $(n-2)/3$ squares in boxes $1,2,$ and $6$, respectively. Hence, we have $$w\left(RD_{(n-2)/3+1}\right)=\left(\frac{43}{48}+\frac{17}{24}+\frac{41}{48}\right)\left(\frac{n-2}{3}\right)+\frac{19}{24} = \frac{59}{72}n-\frac{61}{72}<\frac{59}{72}n.$$ 
    By the symmetry of the weight function, we also have $w(RD_{(2n+2)/3})=w\left(RD_{(n-2)/3+1}\right)<59n/72.$
\end{adjustwidth}

\begin{adjustwidth}{0.5cm}{}

\textbf{Case 3:} $r=2$ and $(n-r)/3+1<\ell<(2n+r)/3$ or $r\in\{0,1\}$ and $(n-r)/3+1\le \ell\le (2n+r)/3$.

In this case, there are $(2n-2r)/3 -\ell+1$, $(n+2r)/3-1$, $\ell-(n+2r)/3,$ $\ell-(n-r)/3-1$, and $(2n+r)/3-\ell$ squares of the reflecting diagonal $RD_\ell$ in boxes $1, 2, 3, 4,$ and $6,$ respectively. Hence, there are $ (n-4r)/3+1$ squares with weight $43/48$, and $(n+2r)/3-1$ squares each with weight $17/24$ and with weight $41/48.$ Thus, the weight $w(RD_\ell)$ for the values of $\ell$ considered in this case is independent of $\ell.$ Thus, in this case, we have $$w(RD_\ell) = \frac{43}{48}\left(\frac{n-4r}{3}+1\right)+\left(\frac{17}{24}+\frac{41}{48}\right)\left(\frac{n+2r}{3}-1\right)=\frac{59}{72}n-\frac{11}{72}r-\frac{2}{3}<\frac{59}{72}n.$$ 
Therefore, the weight of each reflecting diagonal is less than $59n/72$, thereby proving (\ref{lemma:weighting3}).
\end{adjustwidth} We have thus found a weight function $w$ that satisfies all the desired conditions.
\end{proof}

We will use the following standard Chernoff-type bound  (see \cite[Corollary~2.3]{JLR00}) in the proof of Lemma~\ref{lemma:subset}.  

\begin{lemma}[Chernoff Bound]\label{lemma:chernoff}
   If $X$ is the sum of mutually independent Bernoulli random variables with $\mu \coloneqq \Expect{X}$, then for all $\delta \in [0, 1]$, we have
    \begin{equation*}
      \Prob{|X - \mu| \geq \delta \mu} \leq 2e^{-\delta^2 \mu / 3}.
    \end{equation*}
\end{lemma}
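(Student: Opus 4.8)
The plan is to prove the upper and lower tail bounds separately using the exponential moment method (the Bernstein--Chernoff trick) and then to combine them with a union bound, absorbing both into the single factor $e^{-\delta^2\mu/3}$. First I would write $X = \sum_{i=1}^{m} X_i$, where the $X_i$ are mutually independent with $\Prob{X_i = 1} = p_i$ and $\Prob{X_i = 0} = 1 - p_i$, so that $\mu = \sum_{i=1}^{m} p_i$. For any $t > 0$, Markov's inequality applied to the nonnegative random variable $e^{tX}$ gives
\begin{equation*}
  \Prob{X \geq (1+\delta)\mu} = \Prob{e^{tX} \geq e^{t(1+\delta)\mu}} \leq e^{-t(1+\delta)\mu}\,\Expect{e^{tX}}.
\end{equation*}

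Next I would bound the moment generating function. By independence, $\Expect{e^{tX}} = \prod_{i=1}^{m} \Expect{e^{tX_i}} = \prod_{i=1}^{m} \bigl(1 + p_i(e^t - 1)\bigr)$, and the elementary inequality $1 + x \leq e^{x}$ yields $\Expect{e^{tX}} \leq \exp\bigl(\mu(e^t - 1)\bigr)$. Substituting this in and minimizing the exponent $\mu(e^t - 1) - t(1+\delta)\mu$ over $t$, the optimum occurs at $t = \ln(1+\delta)$, producing the classical estimate
\begin{equation*}
  \Prob{X \geq (1+\delta)\mu} \leq \left(\frac{e^{\delta}}{(1+\delta)^{1+\delta}}\right)^{\mu}.
\end{equation*}
The lower tail is handled symmetrically by running the same argument with $t < 0$ (equivalently, applying the method to $-X$), which gives $\Prob{X \leq (1-\delta)\mu} \leq \bigl(e^{-\delta}/(1-\delta)^{1-\delta}\bigr)^{\mu}$.

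The remaining, and most delicate, step is to convert these sharp but unwieldy expressions into the stated Gaussian-type form. For the upper tail this amounts to verifying that $f(\delta) \coloneqq \delta - (1+\delta)\ln(1+\delta) \leq -\delta^2/3$ for all $\delta \in [0,1]$; I would do this by setting $g(\delta) \coloneqq f(\delta) + \delta^2/3$, noting $g(0) = 0$, and checking that $g'(\delta) = \tfrac{2}{3}\delta - \ln(1+\delta) \leq 0$ on $[0,1]$, which holds because $\ln(1+\delta) \geq \tfrac{2}{3}\delta$ throughout this interval (an elementary calculus fact, since the difference vanishes at $\delta = 0$ and stays nonnegative up to $\delta = 1$, as $\ln 2 > 2/3$). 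An analogous but easier computation shows $-\delta - (1-\delta)\ln(1-\delta) \leq -\delta^2/2$, giving the stronger lower-tail bound $\Prob{X \leq (1-\delta)\mu} \leq e^{-\delta^2\mu/2} \leq e^{-\delta^2\mu/3}$. Combining the two tails by the union bound then yields $\Prob{|X - \mu| \geq \delta\mu} \leq \Prob{X \geq (1+\delta)\mu} + \Prob{X \leq (1-\delta)\mu} \leq 2e^{-\delta^2\mu/3}$, as required. The main obstacle is precisely this last calculus estimate: the two-sided bound is forced to use the weaker of the two exponential constants, so some care is needed to confirm that $1/3$ really is valid for the upper tail across the whole range $\delta \in [0,1]$, and not merely in the small-$\delta$ regime where a Taylor expansion makes it transparent.
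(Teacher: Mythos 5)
Your proof is correct, but there is nothing in the paper to compare it against: the paper does not prove this lemma, it simply quotes it as a standard Chernoff-type bound with a citation to Corollary 2.3 of Janson, \L{}uczak, and Ruci\'{n}ski \cite{JLR00}. Your argument is the standard self-contained derivation that lies behind that reference: the exponential moment method yielding the tails $\bigl(e^{\delta}/(1+\delta)^{1+\delta}\bigr)^{\mu}$ and $\bigl(e^{-\delta}/(1-\delta)^{1-\delta}\bigr)^{\mu}$, followed by calculus reductions to the Gaussian-type form. The delicate steps all check out. The upper-tail inequality $\delta-(1+\delta)\ln(1+\delta)\le-\delta^{2}/3$ on $[0,1]$ does reduce, as you say, to $\ln(1+\delta)\ge\tfrac{2}{3}\delta$, and your justification is sound: the difference is concave in $\delta$, vanishes at $\delta=0$, and is positive at $\delta=1$ because $\ln 2>2/3$, so it is nonnegative on the whole interval (and indeed this fails for larger $\delta$, so the restriction to $\delta\in[0,1]$ in the lemma statement is exactly what makes the constant $1/3$ work). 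The lower-tail estimate with constant $1/2$ follows from $-\ln(1-\delta)\ge\delta$, and the union bound costs only the factor $2$. The only things left implicit are trivial degenerate cases ($\delta=0$ or $\mu=0$, where the claimed bound holds vacuously since the right-hand side is at least $1$ or $X$ is deterministic), and the routine continuity convention $(1-\delta)\ln(1-\delta)\to 0$ at $\delta=1$; neither is a gap. In short: the paper treats this lemma as a black box, and you have correctly opened the box.
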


We are now equipped with the necessary tools to prove Lemma~\ref{lemma:subset}. 

\begin{proof}[Proof of Lemma~\ref{lemma:subset}]  First, consider choosing $S\subseteq [n]\times [n]$ randomly by including every square $(i,j)$ independently with probability $w((i,j))$, where $w : [n] \times [n] \rightarrow [17,24, 1]$ is the weighting from Lemma~\ref{lemma:weighting}. Note that every line $L \in \cL$ satisfies $\Expect{|L\cap S|} = w(L)$ and $|L \cap S|$ is the sum of $n$ mutually independent Bernoulli random variables.

Let $L\in \cR\cup \cC$. By Lemma~\ref{lemma:weighting}(\ref{lemma:weighting1}), we know that $\Expect{|L \cap S|}=5n/6\pm10/3.$ Applying Lemma~\ref{lemma:chernoff} with $n^{-1/3}$ playing the role of $\delta$ and $|L\cap S|$ playing the role of $X$, we have 
\begin{equation*}
    \Prob{|L \cap S| = (1\pm n^{-1/4})\frac{5n}{6}} \geq 1 - 2\exp\left(-\frac{5n^{1/3}}{19}\right) \geq 1 - \frac{0.01}{2n}.
\end{equation*}
Hence, by a union bound over the $n$ rows and $n$ columns of the chessboard, we see that in $S$, each row and each column contains $(1\pm n^{-1/4})5n/6$ squares with probability at least $0.99$. 

Now, let $L\in \mathcal{ND}\cup \mathcal{RD}$. By Lemma~\ref{lemma:weighting}(\ref{lemma:weighting2}) and (\ref{lemma:weighting3}), we know that  $\Expect{|L \cap S|}<59n/72$, and moreover, by considering increasing the weights, $|L\cap S|$ is stochastically dominated by a random variable $X_L$ which is the sum of $n$ mutually independent Bernoulli random variables with $\Expect{X_L} = 59n/72$. Applying Lemma~\ref{lemma:chernoff} with $59n/72$ playing the role of $\mu,$ $1/118$ playing the role of $\delta$, and $X_L$ playing the role of $X$, we have
\begin{equation*}
    \Prob{|L \cap S| \le  \frac{119n}{144}} \geq \Prob{\left|X_L - \frac{59n}{72}\right| > \frac{n}{144}} \geq 1 - 2\exp\left(-\frac{n}{50976}\right) \geq 1 - \frac{0.01}{3n}.
\end{equation*} By a union bound over the $n$ reflecting diagonals and the $2n$ non-reflecting diagonals of the chessboard, we see that with probability at least 0.99, each reflecting diagonal and each non-reflecting diagonal in $S$ contains at most $119n/144$ squares.

Now, let $A,B\subseteq [n]$ be subsets of size at least $119n/144$. Then $|A\times B| \ge (119n/144)^2$. Since each square of $[n]\times [n]$ is included in $S$ independently with probability at least $17/24$, we have 
\begin{equation*}
    \Expect{|S\cap (A\times B)|} \ge \left(\frac{119n}{144}\right)^2\left(\frac{17}{24}\right)>\frac{29n^2}{60}.
\end{equation*} Applying Lemma~\ref{lemma:chernoff} with $57/58$ playing the role of $\delta$ and  $|S\cap (A\times B)|$ playing the role of $X$, we have 
\begin{equation*}
    \Prob{|S\cap (A\times B)|\ge \frac{n^2}{120}}\ge 1- 2\exp\left(-\frac{n^2}{7}\right)\ge 1 - \frac{0.01}{4^n}.
\end{equation*} By a union bound over at most $4^n$ choices of $A,B\subseteq [n]$, we conclude that $|S\cap (A\times B)|\ge n^2/120$ with probability at least 0.99.
Therefore, with positive probability, there exists a subset $S$ of $[n]\times [n]$ that satisfies all three properties. 
\end{proof}

Now we can finish the proof of our main theorem.

\begin{proof}[Proof of Theorem~\ref{thm:main-thm}] 
Let $n$ be sufficiently large, and consider an $n\times n$ reflecting chessboard represented by $[n]\times [n]$. By Lemma~\ref{lemma:subset}, there exists a subset $S$ of $[n]\times [n]$ such that each row and each column contains $(1\pm n^{-1/4})5n/6$ squares, each non-reflecting diagonal and each reflecting diagonal contains at most $ 119n/144$ squares, and for every $A, B\subseteq [n]$ satisfying $|A|,|B|\geq 199n/144$, we have $|S\cap (A\times B)|\ge n^2/120.$ 

Consider the bipartite graph $G$ with one part $\cR=\bigcup_{i=1}^n\{R_i\}$ corresponding to the $n$ rows of the reflecting chessboard and the other part $\cC=\bigcup_{i=1}^n\{C_i\}$ corresponding to the $n$ columns. For each square $(i,j)\in S$, include $\{R_i,C_j\}$ in the edge set of $G$. Assign to each edge $\{R_i, C_j\}$ in $G$ two colors corresponding to the diagonals containing $(i,j)$ as follows.
\begin{itemize}
    \item If $0\le i+j-(n+1)\le n$, assign $D_{i+j-(n+1)}^+$ to $\{R_i,C_j\}$. Otherwise, assign $RD_{i+j}$ to $\{R_i,C_j\}$.
    \item If $0\le i-j\le n$, assign $D_{i-j}^-$ to $\{R_i,C_j\}$.
     Otherwise, assign $RD_{j-i}$ to $\{R_i,C_j\}$.
\end{itemize} 
Observe that this coloring is 2-bounded, as any two lines of the reflecting chessboard intersect in at most two squares.

We can now apply Lemma~\ref{lemma:rainbow} to $G$ with $\alpha = 1/120, t=2, b=2$, and $d=5n/6$ to find a perfect rainbow matching in $G$. Observe that \begin{enumerate}
    \item every vertex has degree $(1\pm n^{-1/4})5n/6$,
    \item every color has degree at most $(1-1/120)5n/6$, and
    \item there are at least $n^2/120$ edges between any two sets $\cR'\subseteq \cR$ and $\cC'\subseteq \cC$ of size at least $(1-1/120)5n/6$.
\end{enumerate} By Lemma~\ref{lemma:rainbow}, the graph $G$ has a perfect rainbow matching. Therefore, the corresponding subset $S$ of $[n]\times [n]$ contains a reflecting $n$-queens configuration, as desired.
\end{proof}

\bibliographystyle{amsabbrv}
\bibliography{ref}

\end{document}